\numberwithin{equation}{section}
\newtheorem{theorem}{Theorem}[section]
\newtheorem{lemma}{Lemma}[section]
\theoremstyle{remark}
\newtheorem{remark}{Remark}[section]
\providecommand{\abs}[1]{\lvert #1\rvert}
\newcommand{\nc}{\newcommand}
\nc{\vb}{\mathbf{v}}
\nc{\bx}{\mathbf{x}}
\nc{\by}{\mathbf{y}}
\nc{\bz}{\mathbf{z}}
\nc{\bu}{\mathbf{u}}
\nc{\bv}{\mathbf{v}}
\nc{\ba}{\mathbf{a}}
\nc{\bs}{\mathbf{s}}
\nc{\bq}{\mathbf{q}}
\nc{\bd}{\mathbf{d}}
\nc{\bb}{\mathbf{b}}
\nc{\bc}{\mathbf{c}}
\nc{\bi}{\mathbf{i}}
\nc{\bfr}{\mathbf{r}}
\nc{\bA}{\mathbf{A}}
\nc{\R}{\mathbb R}
\nc{\N}{\mathbb N}
\nc{\C}{\mathbb C}
\nc{\D}{\mathbb D}
\nc{\Z}{\mathbb Z}
\nc{\F}{\mathbf F}
\nc{\bbS}{\mathbb S}
\nc{\B}{\cal B}
\nc{\br}{\bigr}
\nc{\bl}{\bigl}
\nc{\Bl}{\Bigl}
\nc{\Br}{\Bigr}
\nc{\ind}{\mathbf{1}}
\nc{\bP}{\mathbf{P}}
\title{Large deviations of the long term  distribution of\\ a non Markov
  process}
\author{Anatolii A. Puhalskii \footnote{Email: puhalski@iitp.ru}\\
 Institute for Problems in Information
Transmission}
\begin{document}

\maketitle
\sloppy
\begin{abstract}
We prove that the long term distribution of the queue length
 process
in an ergodic generalised Jackson network obeys the Large Deviation
Principle with a  deviation function given by the
quasipotential. The latter  is related  to the unique long term
 idempotent
distribution, which is also a stationary idempotent distribution,
 of the large deviation limit of the queue length process. The
proof draws on developments in queueing network stability and
idempotent probability. 
\end{abstract}

\section{Introduction and summary}
\label{sec:introduction}

In a seminal contribution, Freidlin and Wentzell  \cite{wf} obtained the Large
Deviation Principle (LDP) for the stationary distribution of a 
diffusion process and showed that the deviation function,
which is often referred to as  the
action functional or the (tight) rate function, is given
by the quasipotential. Their ingenious analysis relied heavily
on the strong Markov property and involved an intricate study of attainment
times. Shwartz and Weiss \cite{SchWei95} adapted the methods of
Freidlin and Wentzell \cite{wf} to the
setting of  jump
Markov processes.
 In Puhalskii \cite{Puh03}, we suggested a
different, arguably, more direct and, as we hope, more robust
 approach.
 It was prompted by the analogy between large
deviations and weak convergence and sought to identify the 
deviation function  in terms of
the stationary idempotent distribution of a large deviation limit.
In this paper, the approach is applied to establishing the LDP for the
long term distribution of the non Markov process of
 queue lengths in a generalised Jackson network. 
It is noteworthy that, in addition to being non Markovian,
 generalised Jackson networks  fall into the
category of stochastic  systems with discontinuous dynamics, whose
analysis is generally more difficult.
We  show that the deviation function is still given by the
quasipotential which is related to the stationary idempotent
distribution of the limit idempotent process. That stationary
idempotent distribution  is also a unique long term idempotent
distribution, the uniqueness being proved by a coupling argument.
Geometric ergodicity of the queue length process enables us to
conclude that the long term idempotent distribution is the large deviation limit of
the long term queue length distributions.

\section{The setup and main result}
\label{sec:setup-main-results}

We consider a  queueing network with
 a homogeneous customer population which comprises $K$   
 single server stations.   
Customers arrive exogenously at the stations and are served there in the
order of arrival, one customer at a time. Upon being served, they either 
join a queue at another station or leave the network.
Let $A_k(t)$  denote the   
cumulative number of exogenous arrivals  at station $k$ 
by time $t$\,,
let $S_k(t)$ denote the cumulative number of customers   
that are served   
at station $k$ for the first $t$ units of busy time of that
station, and let 
$R_{kl}(m)$
denote the cumulative number of customers among the   
first $m$ customers  departing station $k$ that go directly to station   
$l$. Let
$A_k=(A_k(t),\,t\in\R_+)$,
$S_k=(S_k(t),\,t\in\R_+)$, and $R_k=(R_{k}(m),\,m\in\Z_+)$,
where $R_{k}(m)=(R_{kl}(m),\,l\in\mathcal{K})$ and
 $\mathcal{K}=\{1,2,\ldots, K\}$\,. 
It is assumed that the $A_k$ and $S_k$ are nonzero renewal processes
and
$  R_{kl}(m)=\sum_{i=1}^m 1_{\{\zeta_{k}^{(i)}=l\}}$,
where $\{\zeta_k^{(1)},\zeta_k^{(2)},\ldots\}$ is  a sequence of
i.i.d. random variables assuming values in  $\mathcal{K}$\,,
$1_\Gamma$ standing for the indicator function of set $\Gamma$\,.
The random entities $A_k$\,, $S_l$\,, $R_i$ and $Q(0)$ are
assumed
to be defined on  common probability
space $(\Omega,\mathcal{F},\mathbf{P})$ and
be mutually independent,  where 
$k,l,i\in\mathcal{K}$\,.
We  denote $p_{kl}=\mathbf P(\zeta_k^{(1)}=l)$ and let $P=(p_{kl})_{k,l=1}^K$\,.
The matrix $P$ is assumed to be of spectral radius less than unity so
that every arriving customer eventually leaves.
 Let $Q=(Q(t),\,t\in\R_+)$ represent 
  the queue length process,
where   $Q(t)=(Q_k(t),\,k\in\mathcal{K})$ and 
$Q_{k}(t)$ represents the number of customers  at station $k$ at time   
$t$\,.  
All  the stochastic processes are assumed  to have
piecewise constant right--continuous with
left--hand limits trajectories. Accordingly, they are considered as random
elements of the associated Skorohod spaces.

For $k\in\mathcal{K}$   and $t\in\R_+$, 
the following equations are satisfied:
\begin{equation}
  \label{eq:2}
Q_k(t)=Q_k(0)+A_k(t)+\sum_{l=1}^KR_{lk}\bl(D_{l}(t)\br)-D_k(t),
\end{equation}
where 
\begin{equation}
  \label{eq:1}
D_k(t)=S_k\bl(B_k(t)\br)
\end{equation}
represents  the number of  departures  from station $k$ by time $t$ and
\begin{equation}
  \label{eq:5}
  B_k(t)=\int_0^t1_{\{Q_k(u)>0\}}\,du
\end{equation}
represents the cumulative busy time of station $k$ by time $t$\,.
For  given realisations of
 $Q_k(0)$, $A_k$,  $S_k$, and $R_k$, 
there exist unique  $Q_k=(Q_k(t),\,t\in\R_+)$, 
$D_k=(D_k(t),\,t\in\R_+)$ and $B_k=(B_k(t),\,t\in\R_+)$ 
that satisfy 
 \eqref{eq:2}, \eqref{eq:1} and \eqref{eq:5}, see, e.g.,
Chen and Mandelbaum \cite{MR1106809}. 
The process $Q$ is non Markov unless all  $A_k$ and $S_k$ are Poisson
processes.

Let, for 
$k\in\mathcal{K}$\,, nonnegative random variables $\xi_k$ and
$\eta_k$ represent generic times between exogenous arrivals  and service times at station $k$, respectively. 
We assume that  
$\mathbf{P}(\xi_k=0)=\mathbf{P}(\eta_k=0)=0,$
$\mathbf{E}\exp(\theta \xi_k)<\infty$ and 
$\mathbf{E}\exp(\theta \eta_k)<\infty$  for some 
$\theta>0$, and the cumulative distribution functions of the 
$\xi_k$ and $\eta_k$ are right--differentiable at $0$ with 
 positive derivatives. 
Let
$\beta_k=\sup\{\theta\in\R_+:\,\mathbf{E}\exp(\theta
\xi_k)<\infty\}$ and
$\gamma_k=\sup\{\theta\in\R_+:\,\mathbf{E}\exp(\theta
\eta_k)<\infty\}$\,. 
Let also $\pi(u)=u\ln u-u+1$ if $u>0$\,, $\pi(0)=1$\,,
$\pi(\infty)=\infty$\,,  $0/0=0$\,, and $\infty\cdot 0=0$\,. 
Let $\mathbb{S}_+^{K\times K}$ represent the set of
row--substochastic $K\times K$--matrices and
 $I$ represent the $K\times K$--identity matrix.
Given vectors $\alpha=(\alpha_1,\ldots,\alpha_K)^T\in\R_+^K$
and 
$\delta=(\delta_1,\ldots,\delta_K)^T\in\R_+^K$,  matrix $\varrho\in\mathbb{S}_+^{K\times K}$ with
rows $\varrho_k,\,k\in\mathcal{K}$, and $J\subset\mathcal{K}$,
we define 
    \begin{align*}
 \psi^A_k(\alpha_k)&=\sup_{\theta<\beta_k}\bl(\theta- \alpha_k\ln \mathbf{E}
\exp(\theta \xi_k)\br)\,,\\
\psi^S_k(\delta_k)&=\sup_{\theta<\gamma_k}\bl(\theta- \delta_k\ln \mathbf{E}
\exp(\theta \eta_k)\br)\,,\\
\psi^R_k(\varrho_{k})&=
\sum_{l=1}^K \pi\Bl(\frac{\varrho_{kl}}{p_{kl}}\Br)p_{kl}+
\pi\Bl( \frac{1-\sum_{l=1}^K
  \varrho_{kl}}{1-\sum_{l=1}^K p_{kl} }\Br)\bl(1-\sum_{l=1}^K p_{kl}\br)
\end{align*}
and
\begin{equation}
\label{eq:psi_J}
     \psi_J(\alpha,\delta,\varrho)= \sum_{k=1}^K\psi_k^A(\alpha_k)+
\sum_{k\in J^c}\psi^S_k(\delta_k)
+\sum_{k\in J}\psi^S_k(\delta_k)
\,1_{\{\delta_k>\mu_k\}}
+\sum_{k=1}^K \delta_k\, \psi^R_k(\varrho_k)\,,
\end{equation}
where $J^c=\mathcal{K}\setminus J\,$.
Also, for   $y\in\R^K$, we let
\begin{equation}  \label{eq:Psi}
  \Psi_J(y)=\inf_{\substack{(\alpha,\delta,\varrho)\in\R_+^K\times\R_+^K\times
\mathbb{S}_+^{K\times K} 
:\\y=\alpha+(\varrho^T-I)\delta}}\psi_J(\alpha,\delta,\varrho)\,.
\end{equation}
If  $\,J\,$ is  a nonempty subset of $\mathcal{K}$,  
we denote 
$F_J=\{x=(x_1,\ldots,x_K)\in \R_+^K:  
x_k=0,k\in J,x_k>0,k\not\in J\}\,$,
$F_\emptyset$ is defined to be the interior of $\R_+^K$\,.
 Let, for
$x\in\R_+^K$ and $y\in\R^K$\,,
\begin{equation}
  \label{eq:3}
  L(x,y)=\sum_{J\subset\mathcal{K}}
1_{ F_J}(x)\Psi_J(y)\,.
\end{equation}
The function $L(x,y)$ is seen to be nonnegative.

Let \begin{equation*}
  \mathbf{I}_{x}(q)=\int_0^\infty
L(q(t),\dot q(t))\,dt\,,
\end{equation*}
provided $q=(q(t)\,,t\in\R_+)\in\D(\R_+,\R_+^K)$ is  
absolutely continuous  with
$q(0)=x\in\R_+^K$ and $\mathbf{I}_{x}(q)=\infty$\,, otherwise,
 where
 $q(t)=(q_1(t),\ldots,q_K(t))^T$\,.

With  large deviations in mind, we will
 assume in the next theorem that the initial queue length depends on large parameter
$n$\,, so, superscript ''$n$'' will be used to denote the associated random
quantities, e.g., $Q^n(t)$ is the queue length vector at time $t$\,.
Theorem 2.2 in Puhalskii
\cite{Puh07} proves the following result. 
\begin{theorem}
  \label{the:gen_jack}
If, in addition, $\mathbf P(\abs{Q^n(0)/n-
  x}>\epsilon)^{1/n}\to0$ as $n\to\infty$\,, for all
$\epsilon>0$\,,
then    the  queue length processes 
$\{(Q^n(nt)/n\,,t\in\R_+)\,,n\in\N\}$ obey  the LDP 
  in $\mathbb D(\R_+,\R^K_+)$ for rate $n$
 with the deviation function $\mathbf{I}_{x}(q)$\,.
\end{theorem}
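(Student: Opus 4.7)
The plan is to derive the process-level LDP from LDPs for the driving primitives together with an inversion of the network map, recognising the rate function as the pointwise Lagrangian $L(x,y)$. Under the exponential moment hypotheses on $\xi_k$ and $\eta_k$, the scaled interarrival and service renewal processes $(A_k^n(n\cdot)/n)$ and $(S_k^n(n\cdot)/n)$ obey Mogulskii-type LDPs in $\D(\R_+,\R_+)$ with good rate functions $\int_0^\infty\psi_k^A(\dot a_k(t))\,dt$ and $\int_0^\infty\psi_k^S(\dot s_k(t))\,dt$ on absolutely continuous paths (and $\infty$ otherwise). The routing processes $R_k^n$ satisfy a Sanov-type process LDP whose integrand, once reparametrised by the cumulative service output $\delta_k$, is $\delta_k\,\psi_k^R(\varrho_k)$. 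Since the primitives and $Q^n(0)$ are mutually independent and $Q^n(0)/n$ converges exponentially fast to $x$, the joint LDP holds with rate equal to the sum of the marginal rates, concentrated on triples starting from $x$.

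Next, for fixed primitives, equations (2.1)--(2.3) uniquely determine $(Q,D,B)$ through a map $\Phi$ (Chen--Mandelbaum \cite{MR1106809}). I would verify that $\Phi$ is continuous in the Skorohod topology on neighbourhoods of trajectories whose visits to the faces $F_J$ reduce to a locally finite collection of transversal boundary crossings, but that globally $\Phi$ is only lower semicontinuous: the busy-time integrand $1_{\{Q_k>0\}}$ in (2.3) can jump under limits that collapse short excursions above zero, so a naive contraction principle is unavailable.

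To transfer the LDP I would follow the approach of \cite{Puh03}, reinterpreting the LDP of the primitives as convergence of idempotent distributions, extracting idempotent limit points of $(Q^n(n\cdot)/n)$, and representing each such limit point $q$ as the image under the fluid analogue of $\Phi$ of idempotent primitives $(a,s,r)$ of finite combined rate satisfying the integrated forms of (2.1)--(2.3). The deviation function of $q$ then equals the infimum of the combined primitive rate over all producing triples. Because the constraints and integrands are pointwise local in $t$, this infimum decouples across time and reduces at each instant to the finite-dimensional variational problem defining $\Psi_J$, where $J$ indexes the face $F_J$ containing $q(t)$; identifying the result with $\int_0^\infty L(q(t),\dot q(t))\,dt$ yields $\mathbf I_x(q)$.

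The main obstacle is performing this pointwise minimisation on the boundary faces. When $q_k(t)=0$ the busy-time derivative $\dot b_k(t)$ is constrained to $[0,1]$ rather than equal to $1$, so the service-rate variable $\delta_k$ satisfies only an inequality; the penalty $\psi_k^S(\delta_k)$ therefore collapses to zero whenever server $k$ can plausibly remain underloaded, surviving only in the overloaded regime $\delta_k>\mu_k$ and accounting for the indicator $1_{\{\delta_k>\mu_k\}}$ in $\psi_J$. Establishing matching upper and lower bounds uniformly in $J$, proving lower semicontinuity and goodness of $\mathbf I_x$, and verifying exponential tightness of $(Q^n(n\cdot)/n)$ from the primitive LDPs and the subcriticality of $P$ are the remaining technical ingredients that complete the argument.
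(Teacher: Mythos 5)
The paper does not actually prove Theorem~\ref{the:gen_jack}: it is imported verbatim as Theorem 2.2 of Puhalskii \cite{Puh07}, so there is no in-paper proof to compare against. Taking your sketch on its own merits, the global strategy is the right one and matches the strategy of \cite{Puh07}: Mogulskii/Sanov LDPs for the primitives on absolutely continuous paths, recognition that the naive contraction principle fails because the busy-time indicator in \eqref{eq:5} makes the network map discontinuous, transfer via LD-convergence of idempotent distributions in the sense of \cite{Puh03,Puh01}, and a pointwise reduction to the finite-dimensional problem~\eqref{eq:Psi}. Your explanation of the indicator $1_{\{\delta_k>\mu_k\}}$ through the slack $\dot b_k(t)\in[0,1]$ when $q_k(t)=0$ is exactly the right mechanism, since the Legendre-transform cost $\sup_\theta(\theta\dot b_k-\delta_k\ln\mathbf Ee^{\theta\eta_k})$ is minimised at $\dot b_k=\delta_k/\mu_k\le 1$ with value zero precisely when $\delta_k\le\mu_k$.

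Two points, one minor and one substantive. Minor: exponential tightness of $(Q^n(n\cdot)/n)$ over compact time horizons does not use any subcriticality of $P$; it follows from exponential tightness of the primitives together with the Lipschitz continuity of the Chen--Mandelbaum fluid network map on compacts, and the subcritical-spectral-radius hypothesis only guarantees the model is well defined. Substantive: the core of the transfer argument is missing. After establishing exponential tightness, you must (i) show that every idempotent limit point of the joint scaled processes is supported on trajectories satisfying the limiting equations~\eqref{eq:6a}--\eqref{eq:8a}, and (ii) show that those equations determine $(q,b,d)$ uniquely from $(x,a,s,r)$. Step (ii), the idempotent analogue of Chen--Mandelbaum uniqueness for fluid Jackson networks, is what makes the limiting deviability $\mathbf\Pi_x$ unique and is the crux of the whole identification; your sketch asserts the representation but never confronts uniqueness. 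Related to this, the claim that $\Phi$ is ``globally only lower semicontinuous'' is not well posed: $\Phi$ is path-valued, and the relevant fact is continuity of $\Phi$ on the effective domain of the primitive rate (equivalently on the sets $\{\mathbf\Pi^A\otimes\mathbf\Pi^S\otimes\mathbf\Pi^R\ge\kappa\}$), which is a different and sharper statement than lower semicontinuity of anything and is what actually licenses the transfer of the LDP.
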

For $x\in\R_+^K$\,, we define the quasipotential by
\begin{equation}
  \label{eq:4}
\mathbf  V(x)=\inf_{t\in\R_+}\inf_{\substack{q\in \D(\R_+,\R_+^K):\\
\,q(t)=x}}\mathbf I_0(q)\,.
\end{equation}
In order to address an LDP for the stationary queue length
distribution, we assume that the network  is subcritical:
\begin{equation}
  \label{eq:13}
\mu>(I-P^T)^{-1}\lambda
\end{equation} where 
$\mu=(\mu_1,\ldots,\mu_K)^T$\,,
$\lambda=(\lambda_1,\ldots,\lambda_K)^T$\,,
$\mu_k=1/\mathbf E\eta_k$ and $\lambda_k=1/\mathbf E\xi_k$\,.
(Inequalities between  vectors or matrices 
are understood to hold entrywise.)
 In addition, 
 we assume that 
 \begin{enumerate}
 \item there exists number $\overline\eta>0$ such that
$\mathbf E(\eta_k-u|\eta_k>u)\le \overline\eta$\,, for
$k\in\mathcal{K}$ and $u>0$\,,
\item $\mathbf P(\xi_k>u)>0$\,, for 
$k\in\mathcal{K}$ and $u>0$\,,
\item for  $k\in\mathcal{K}$\,,
there exist nonnegative function $f_k(u)$ on $\R_+$ with
  $\int_0^\infty f_k(u)\,du>0$ and $m_k\in\N$ 
such that $\mathbf P(\sum_{i=1}^{m_k}\xi_{k,i}\in[v,w])\ge
\int_v^wf_k(u)\,du$\,, provided  $0\le v\le w$\,, where 
$\xi_{k,1},\ldots,\xi_{k,m_k}$ are i.i.d. and are distributed as $\xi_k$\,.
 \end{enumerate}
Under
these hypotheses,
the   $Q(t)$ converge in distribution to random variable $\hat Q$\,, 
as $t\to\infty$\,,
 see  Down and Meyn \cite{MeyDow94}. 
The convergence holds for  arbitrary
initial vector $Q(0)$ and  the convergence rate  is
geometric for the metric of total variation. In addition,
if the random variables $Q(t)$
are augmented with residual service and interarrival times to produce a
Markov process, then that Markov process has a unique stationary
distribution, the distribution of $\hat Q$ being a marginal distribution.
Our main result is the following theorem.
\begin{theorem}
  \label{the:stat_LDP}
The sequence $\{\hat Q/n\,,n\in\N\}$
obeys the LDP in $\R_+^K$ for rate $n$ with the deviation function
 $\mathbf V(x)$\,.
\end{theorem}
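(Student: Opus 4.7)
The plan is to employ the idempotent--probability strategy of Puhalskii \cite{Puh03}. Theorem \ref{the:gen_jack} identifies the large--deviation limit of the scaled queue length processes $\{Q^n(n\cdot)/n\}$ with an ``idempotent process'' on $\D(\R_+,\R_+^K)$ whose finite--dimensional idempotent densities are expressed through $\mathbf{I}_x$. By analogy with classical Markov ergodic theory, one expects this idempotent process to have a unique long--term (and also stationary) idempotent distribution $\Pi$ with idempotent density $x\mapsto \mathbf V(x)$, and the geometric ergodicity of $Q$ guaranteed by Down and Meyn \cite{MeyDow94} should allow one to transfer the LDP from $Q^n(nt_n)/n$ to $\hat Q/n$ for a suitable $t_n\to\infty$.

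The first stage is to study the idempotent flow defined by $S_t(x,A)=\exp(-\inf\{\mathbf I_x(q): q(t)\in A\})$. Using \eqref{eq:psi_J}--\eqref{eq:3} and the shift--invariance of $L$ with respect to time, one verifies that $\{S_t\}$ is an idempotent semigroup and that $\mathbf V(x)=\inf_{t\ge 0}\inf\{\mathbf I_0(q): q(t)=x\}$ is an idempotent--stationary density. Uniqueness of $\mathbf V$ as a long--term distribution would follow from a coupling argument at the idempotent level: under the subcriticality assumption \eqref{eq:13} the fluid dynamics drain the network in finite time from any initial state at zero action, so two idempotent flows started from arbitrary initial idempotent distributions can be merged without cost, forcing both to share the same long--time limit. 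This is where hypotheses (1)--(3) will be used, to guarantee enough flexibility in the feasible paths to execute the merging.

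The second stage is the transfer to the actual stationary distribution. Exponential tightness of $\{\hat Q/n\}$ must be established via a Lyapunov--type bound using the exponential--moment hypothesis on $\xi_k$, $\eta_k$ together with subcriticality, giving $\limsup_n n^{-1}\log \mathbf P(|\hat Q|>Mn)\to -\infty$ as $M\to\infty$. For the LDP lower bound, given $x\in\R_+^K$ and $\epsilon>0$, one picks $t$ and a path $q$ with $q(0)=0$, $q(t)=x$, $\mathbf I_0(q)<\mathbf V(x)+\epsilon$, applies Theorem \ref{the:gen_jack} with initial condition $0$, and exploits the convergence in total variation of $\mathrm{law}(Q(nt_n))$ to $\mathrm{law}(\hat Q)$ at a geometric rate, arguing that the total--variation gap does not spoil the LDP exponent once $t_n$ is chosen to grow fast enough. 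The upper bound is dual: compact containment from exponential tightness reduces the problem to bounded sets, and on these, combined with the identification of $\mathbf V$ in Stage 1, Theorem \ref{the:gen_jack} gives $\mathbf P(Q^n(nt_n)/n\in F)\le \exp(-n(\inf_F \mathbf V-\epsilon))$, which carries over to $\hat Q/n$ by the same total--variation argument.

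The principal obstacle is the quantitative version of the transfer step. Down and Meyn supply a bound of the form $\|\mathrm{law}(Q(t)\mid Q(0))-\mathrm{law}(\hat Q)\|_{TV}\le C(Q(0))\,\rho^{t}$, but the prefactor $C$ depends on the initial state and, a priori, could blow up with $n$ for the rescaled problem. To apply it one needs a uniform geometric rate over initial states on scale $n$, so that the error after $t_n$ steps is genuinely $o(e^{-cn})$ for every $c>0$. Establishing such a uniform rate, by pushing through the Down--Meyn analysis augmented with the exponential--moment hypothesis on $\xi_k$ and $\eta_k$ and the minorisation provided by hypotheses (1)--(3), is the technical core of the argument. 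The idempotent coupling for uniqueness in Stage 1, in the non--Markov, discontinuous--dynamics setting, is likewise delicate and will have to be executed along the lines of \cite{Puh03}.
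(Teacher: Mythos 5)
Your overall strategy coincides with the paper's: identify a unique stationary/long--term deviability $\hat{\mathbf\Pi}$ for the limit idempotent process by a coupling argument through the origin, identify $-\log\hat{\mathbf\Pi}$ with $\mathbf V$ via \eqref{eq:4}, and transfer the LDP from $Q(nt)/n$ to $\hat Q/n$ using Down--Meyn geometric ergodicity. The idempotent coupling idea (``drain to zero at no cost, then glue'') is exactly what the paper implements in Lemma \ref{le:nul} and Theorem \ref{le:predel}. So the plan is structurally right.

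However, the ``principal obstacle'' you identify in the transfer step is a misconception, and it is the one place where you would otherwise get stuck. You worry that the Down--Meyn prefactor $C(Q(0))$ might blow up with $n$ because the initial state is on scale $n$, so that one would need a uniform geometric rate over initial conditions of size $n$. But the comparison process does not start from a state of size $n$: one compares the law of $\hat Q/n$ with the law of $Q(nt)/n$ started from the \emph{fixed} state $Q(0)=0$. For this single, $n$--independent initial condition the Down--Meyn bound reads
\begin{equation*}
\bigl|\mathbf P(\hat Q/n\in H)-\mathbf P(Q(nt)/n\in H\mid Q(0)=0)\bigr|\le A\rho^{\,nt}\,,
\end{equation*}
with $A$ and $\rho$ not depending on $n$, because the time elapsed is $nt$, not $t$. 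After taking $n$--th roots this error is $A^{1/n}\rho^{t}\le A\rho^{t}$, which is already $<\epsilon$ for fixed large $t$, uniformly in $n$. On the other side of the triangle inequality, Theorem \ref{the:gen_jack} with deterministic initial condition $0$ gives $\mathbf P(Q(nt)/n\in H)^{1/n}\to\mathbf\Pi_{0,t}(H)$ for a fixed $t$, and Theorem \ref{le:predel} gives $\mathbf\Pi_{0,t}(H)\to\hat{\mathbf\Pi}(H)$. There is therefore no need for a diagonal choice $t_n\to\infty$, no need for exponential tightness of $\{\hat Q/n\}$ as a separate step (compactness of the level sets of $\hat{\mathbf\Pi}$ comes from tightness of the deviabilities $\mathbf\Pi_{0,t}$, established in Theorem \ref{le:predel}), and no need to push the Down--Meyn analysis to get uniformity in the initial state. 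Recognising that a fixed initial state suffices, together with the $nt$ time scaling, dissolves what you call the ``technical core'' and reduces the transfer to a three--term triangle inequality as in \eqref{eq:6}.

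One further small remark on Stage 1: in executing the idempotent coupling you should not expect to use hypotheses (1)--(3) for ``flexibility of feasible paths.'' Those hypotheses enter only to invoke the Down--Meyn ergodicity theorem. What the idempotent coupling actually uses is (i) the subcriticality \eqref{eq:13}, which via Lemma \ref{le:renewal_deviab} and a Bramson--type fluid stability argument forces every trajectory with deviability above $\kappa$ to reach the origin in a time $\hat T$ uniform over bounded initial sets (Lemma \ref{le:nul}), and (ii) Remark \ref{re:nul'}, that staying at $0$ costs nothing and is the \emph{only} zero--cost way to return to $0$, which makes the gluing preserve the deviability exactly.
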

\begin{remark}
  \label{re:nul'}
Under \eqref{eq:13},  there is no ''large deviation cost'' for staying at
the origin.
On taking in \eqref{eq:Psi}
$J=\mathcal{K}$\,, $y=0$\,, $\alpha=\lambda$\,,
$\varrho=P$\,, 
and $\delta=(I-P^T)^{-1}\lambda$ and noting that $\delta<\mu$ by \eqref{eq:13},
one can see by \eqref{eq:psi_J} that $\psi_{\mathcal{K}}(\alpha,\delta,\varrho)=0$\,, so 
$\Psi_{\mathcal{K}}(0)=0$ and $L(0,0)=0$\,. 
More generally,  $L(q(t),\dot q(t))=0$
when $q$ is ''a fluid limit queue length'' or the trajectory of the
law of large numbers, i.e.,
$\dot q(t)=\lambda+(P^T-I)\mu+(I-P^T)\phi(t)$\,, where $\phi(t)\in\R_+^K$
and $\phi_k(t)q_k(t)=0$\,, for $k\in\mathcal{K}$\,,
 cf. Puhalskii \cite{Puh07}.
The converse is also true: if $\Psi_J(y)=0$\,, then the infimum in
\eqref{eq:Psi} is attained at
$\alpha=\lambda$\,, $\delta_k=\mu_k$ when $k\not\in J$ and
$\varrho=P$\,. (For a proof, one notes that $\psi^A_k(\alpha_k)=0$\,,
$\psi^S_k(\delta_k)=0$\,, and $\psi^R_k(\varrho_k)=0$\,, if and only
if $\alpha_k=\lambda_k$\,, $\delta_k=\mu_k$\,, and $\varrho_k=(p_{k1},\ldots,p_{kK})$\,,
respectively.) As a byproduct, in \eqref{eq:4} $\mathbf I_0(q)$ can
be replaced with $\int_0^t L(q(s),\dot q(s))\,ds$\,.

\end{remark}
\section{Idempotent probability and the proof of Theorem \ref{the:stat_LDP}}
\label{sec:proof-theor-refth}

Let us  recap some notions of idempotent
probability, see, e.g., Puhalskii \cite{Puh01}.
Let $\Upsilon$ be a set. Function $\mathbf{\Pi}$ from the power
set of $\Upsilon$ to $[0,1]$ is called an idempotent probability
 if $\mathbf{\Pi}(\Gamma)=\sup_{\upsilon\in\Gamma}
\mathbf{\Pi}(\{\upsilon\}),\,\Gamma\subset \Upsilon$ and
$\mathbf{\Pi}(\Upsilon)=1$. The pair $(\Upsilon,\mathbf{\Pi})$ is
called an idempotent probability space.
For economy of notation, 
we denote $\mathbf{\Pi}(\upsilon)=\mathbf{\Pi}(\{\upsilon\})$.
Property $\mathcal{P}(\upsilon),\,\upsilon\in\Upsilon,$ 
pertaining to the elements of $\Upsilon$ is said to hold
$\mathbf{\Pi}$-a.e. if $\mathbf{\Pi}(\mathcal{P}(\upsilon)
\text{ does not hold})=0$\,, where, in accordance with a tradition of
probability theory, we define $\mathbf{\Pi}(\mathcal{P}(\upsilon)
\text{ does not hold})=\mathbf{\Pi}(\{\upsilon\in\Upsilon:\,
\mathcal{P}(\upsilon)
\text{ does not hold}\})$\,.
Function $f$ from  set
$\Upsilon$ equipped with idempotent probability $\mathbf{\Pi}$ to 
set $\Upsilon'$ 
is called an idempotent variable. 
 The idempotent distribution of the idempotent variable $f$ is defined
 as the set function $\mathbf{\Pi}\circ
f^{-1}(\Gamma)=\mathbf{\Pi}(f\in\Gamma),\,\Gamma\subset \Upsilon'$\,.
If $f$ is the canonical idempotent variable defined by
$f(\upsilon)=\upsilon$, then it has $\mathbf{\Pi}$ 
as the idempotent distribution. If
$f=(f_1,f_2)$\,, with $f_i$ assuming values in $\Upsilon'_i$\,,  then the (marginal)
distribution of $f_1$ is defined by
$\mathbf\Pi^{f_1}(\upsilon'_1)=\mathbf\Pi(f_1=\upsilon'_1)=
\sup_{\upsilon:\,f_1(\upsilon)=\upsilon'_1}\mathbf\Pi(\upsilon)$\,.
The  idempotent 
variables $f_1$ and $f_2$ are said to be independent if 
$\mathbf{\Pi}(f_1=\upsilon'_1,\,f_2=\upsilon'_2)=\mathbf{\Pi}(f_1=\upsilon'_1)
\mathbf{\Pi}(f_2=\upsilon'_2)$ for all $(\upsilon'_1,\upsilon'_2)\in
\Upsilon'_1\times\Upsilon'_2$\,,  
so, the joint distribution is the product of the
marginal ones.
Independence of finite collections of idempotent variables is defined
similarly. 
Collection $(X_t,\,t\in\R_+)$ 
of idempotent variables on $\Upsilon$
is called an idempotent process.
The functions $(X_t(\upsilon),\,t\in\R_+)$ 
for various $\upsilon\in\Upsilon$ are
called trajectories (or paths) of $X$.
Idempotent processes are said to be independent if they are
independent as idempotent variables with values in the associated
function spaces. The concepts of idempotent processes with independent
and (or) stationary increments mimic those for stochatic processes.

 If $\Upsilon$ is, in addition, a metric space and 
 the sets $\{\upsilon\in\Upsilon:\,\mathbf{\Pi}(\upsilon)\ge \kappa\}$ 
are compact for all
$\kappa\in(0,1]$, then $\mathbf{\Pi}$ is called a deviability.  
Obviously, $\mathbf{\Pi}$ is a deviability if and only if
$\mathbf{I}(\upsilon)=-\log\mathbf{\Pi}(\upsilon)$ is 
a deviation function.
 If $f$ is a continuous mapping from
$\Upsilon$ to another metric space $\Upsilon'$, then 
 $\mathbf{\Pi}\circ f^{-1}$ is a deviability on $\Upsilon'$.
As a matter of fact, for the latter property to hold, 
one can only require that $f$ be continuous on
the sets $\{\upsilon\in\Upsilon:\,\mathbf{\Pi}(\upsilon)\ge \kappa\}$ for
 $\kappa\in(0,1]$. In general, an idempotent variable is said to be
 Luzin if its idempotent distribution is a deviability.

Let  $\{\mathbf{P}_n,\,n\in\N\}$ be a  sequence of probability measures on
 metric space $\Upsilon$ endowed with the  Borel $\sigma$-algebra and
let $\mathbf{\Pi}$ be a deviability on $\Upsilon$.
The sequence $\{\mathbf{P}_n,\,n\in\N\}$
 is said to large deviation converge (LD converge) at rate $n$
 to  $\mathbf{\Pi}$ as $n\to\infty$ if $\lim_{n\to\infty}\Bl(\int_\Upsilon
 f(\upsilon)^{n}\,\mathbf{P}_n(d\upsilon)\Br)^{1/n}= 
\sup_{\upsilon\in \Upsilon}f(\upsilon)\mathbf{\Pi}(\upsilon)$
 for every bounded continuous $\R_+$-valued function $f$ on $\Upsilon$.
Equivalently, one may require that 
$\lim_{n\to\infty}\mathbf{P}_n(H)^{1/n}= 
\mathbf{\Pi}(H)$ for every $\mathbf\Pi$--continuity set $H$\,, which
is defined by the requirement that the values of $\mathbf\Pi$ on the
interior and closure of $H$ are equal to each other.
Obviously, the sequence $\{\mathbf{P}_n,\,n\in\N\}$ LD 
converges at rate $n$ to
 $\mathbf{\Pi}$ if and only if this sequence obeys the LDP for rate $n$ 
with deviation function
 $\mathbf{I}(\upsilon)=-\log\mathbf{\Pi}(\upsilon)$. 
Similarly, sequence $\mathbf\Pi_n$ of deviabilities on $\Upsilon$ is said to
converge weakly to deviability $\mathbf\Pi$\,, as $n\to\infty$\,, if
$\lim_{n\to\infty}\sup_{\upsilon\in \Upsilon}f(\upsilon)\mathbf{\Pi}_n(\upsilon)= 
\sup_{\upsilon\in \Upsilon}f(\upsilon)\mathbf{\Pi}(\upsilon)$
 for every bounded continuous $\R_+$-valued function $f$ on
 $\Upsilon$\,. The analogue of Prohorov's theorem holds: if the
 sequence $\mathbf\Pi_n$ is tight meaning that
$\inf_{\Gamma\in\Xi}\limsup_{n\to\infty}\mathbf\Pi_n(\Upsilon\setminus
\Gamma)=0$\,, where $\Xi$ represents the collection of compact
subsets of $\Upsilon$\,, then the $\mathbf\Pi_n$ converge to a
deviability along a subsequence.

LD convergence of probability measures 
can be also expressed as LD convergence in distribution of the
associated random variables to idempotent variables. 
 We say that 
  sequence
$\{X_n,\,n\in\N\}$  of random variables defined on respective probability spaces
$(\Omega_n,\mathcal{F}_n,\mathbf{P}_n)$ and assuming  values in 
$\Upsilon'$ 
LD converges in distribution at rate $n$ as $n\to\infty$
to   idempotent variable $X$ defined on 
idempotent probability space $(\Upsilon,\mathbf{\Pi})$ and assuming
values in $\Upsilon'$ if the sequence of the probability laws of the $X_n$ LD
converges to the idempotent distribution of $X$ at rate $n$. 
If  sequence $\{\mathbf{P}_n,\,n\in\N\}$
of probability measures on $\Upsilon$ LD converges to  deviability
$\mathbf{\Pi}$ on $\Upsilon$\,, then one has LD convergence in
distribution for the canonical setting.

We now return to the setting of generalised Jackson networks
 and 
let $\mathbf\Pi_x^Q(q)=e^{-\mathbf I_x(q)}$\,.
It is proved in Puhalskii \cite{Puh07} that under the hypotheses of
Theorem \ref{the:gen_jack}
there exists unique deviability $\mathbf\Pi_x$ on
$\Upsilon=\D(\R_+,\R_+^K\times\R_+^K\times\R_+^K
\times\R_+^K\times\R_+^K\times\R_+^{K\times  K})$ such that
the processes $\bl((Q^n(nt)/n\,,t\in\R_+),(B^n(nt)/n\,,t\in\R_+),
(D^n(nt)/n\,,t\in\R_+),(A^n(nt)/n\,,t\in\R_+),
(S^n(nt)/n,\,t\in\R_+), (R^n(nt)/n\,,t\in\R_+)\br)$ LD converge
at rate $n$  to the canonical
idempotent process $(q,b,d,a,s,r)$ on $\Upsilon$\,.
The component idempotent processes of 
$b$, $d$, $a$, $s$, and $r$ have $\mathbf{\Pi}_x$-a.e.
absolutely continuous nondecreasing trajectories starting at $0$, the
component idempotent processes of $b$ grow not faster than at rate $1$, and 
the component  idempotent processes of $q$ have $\mathbf{\Pi}_x$-a.e.
absolutely continuous  trajectories,
the idempotent process $q$ has idempotent distribution 
$\mathbf \Pi^Q_x$\,, the idempotent processes $a$, $s$
and $r$ are independent with respective idempotent distributions
$\mathbf{\Pi}^A$, $\mathbf{\Pi}^S$ and $\mathbf{\Pi}^R$ defined as
follows, where, by virtue of our working in a canonical setting,
identical  pieces of notation are used for denoting
idempotent processes and their sample trajectories:
\begin{align}
  \label{eq:24}
  \mathbf{\Pi}^A(a)=\prod_{k=1}^K\mathbf\Pi^A_k(a_k)\,,\;
\mathbf\Pi^A_k(a_k)=
\exp\Bl(-\int_0^\infty
\psi^A_k(\dot a_k(t))\,dt\Br)\,,\\
  \label{eq:24b}
  \mathbf{\Pi}^S(s)=\prod_{k=1}^K\mathbf\Pi^S_k(s_k)\,,\;
\mathbf\Pi^S_k(s_k)=
\exp\Bl(-\int_0^\infty\psi^S_k(\dot s_k(t))\,dt\Br)\,,\\
\label{eq:24d}
  \mathbf{\Pi}^R(r)=\prod_{k=1}^K\mathbf\Pi^R_k(r_k)\,,\;
\mathbf\Pi^R_k(r_k)=
\exp\Bl(-\int_0^\infty\psi^R_k(  \dot r_{k}(t))\,dt\Br)\,,
\end{align}
where
$a=(a(t)\,,t\in\R_+)=(a_1,\ldots,a_K)^T$\,, $a_k=(a_k(t)\,,t\in\R_+)$\,,
$a(t)=(a_1(t),\ldots,a_K(t))^T$\,,
$s=(s(t)\,,t\in\R_+)=(s_1,\ldots,s_K)^T$\,, $s_k=(s_k(t)\,,t\in\R_+)$\,,
$s(t)=(s_1(t),\ldots,s_K(t))^T$\,,
$r=(r(t)\,,t\in\R_+)=(r_{1},\ldots,r_K)^T$\,,
$r_k=(r_k(t)\,,t\in\R_+)=(r_{k1},\ldots,r_{kK})$\,,
$r_{kl}=(r_{kl}(t)\,,t\in\R_+)$\,,
$r_k(t)=(r_{k1}(t)\,,\ldots,r_{kK}(t))$\,,
$r(t)=(r_1(t),\ldots,r_K(t))^T$\,, the functions $a_k$\,, $s_k$\,, and
$r_{kl}$ being absolutely continuous 
with  $a_k(0)=0$, $s_k(0)=0$, $r_{kl}(0)=0$, $\dot{a}_k(t)\in\R_+$ a.e., 
$\dot{s}_k(t)\in\R_+$ a.e., and 
 $\dot{r}(t)\in\bbS^{K\times K}_+$  a.e.

Also
  $\mathbf{\Pi}_x$-a.e. the following equations hold for 
$t\in\R_+$ and $\,k\in\mathcal{K}$:
\begin{align}
  \label{eq:6a}
  q_k(t)&=x_k+a_k(t)+
\sum_{l=1}^Kr_{lk}\bl(d_{l}(t)\br)-d_k(t),\\
 \label{eq:7a}
  d_k(t)&=s_k\bl(b_k(t)\br),
\\
\label{eq:8a}
  \int_0^tq_k(u)\,db_k(u)&=\int_0^tq_k(u)\,du,
\end{align}
where 
$b=(b(t),\,t\in\R_+)$\,, $b(t)=(b_1(t),\ldots,b_K(t))^T$\,,
$d=(d(t),\,t\in\R_+)$\,, and $d(t)=(d_1(t),\ldots,d_K(t))^T$\,.
Equations \eqref{eq:6a} and \eqref{eq:7a} are obtained by taking 
large deviation limits in \eqref{eq:2} and \eqref{eq:1}, respectively.
 It is noteworthy that since in \eqref{eq:24}, \eqref{eq:24b} and \eqref{eq:24d} the sample
trajectories enter the deviabilities only through their derivatives,
the idempotent processes $a$\,, $s$ and $r$ have independent and stationary
increments.

By \eqref{eq:6a}, $q(0)=x$ $\mathbf\Pi_x$-a.e.
In order to allow the initial value $q(0)$ to have a nondegenerate
idempotent distribution, we introduce
\begin{equation}
  \label{eq:27}
  \mathbf\Pi(\upsilon)=\sup_{x\in\R_+^K}\mathbf\Pi_x(\upsilon)\tilde{\mathbf\Pi}^{Q_0}(x)\,,
\end{equation}
where $\tilde{\mathbf\Pi}^{Q_0}$ is a deviabiltiy on $\R_+^K$\,.
One can see that $\mathbf\Pi$ is a deviability on
$\Upsilon$\,. 
Obviously, $\mathbf\Pi(q(0)=x)=\tilde{\mathbf\Pi}^{Q_0}(x)$
and $q(0)$\,, $a$\,, $s$ and $r$ are independent under $\mathbf\Pi$\,.
Also, the marginal idempotent distribution of $q$ is given by
\begin{equation*}
  \mathbf\Pi^Q(q)=\sup_{x\in\R_+^K}\mathbf\Pi^Q_x(q) \tilde{\mathbf\Pi}^{Q_0}(x)\,.
\end{equation*}
By \eqref{eq:6a}, $\mathbf\Pi$--a.e.,
\begin{equation}
  \label{eq:11}
  q_k(u)=q_k(0)+a_k(u)+\sum_{l=1}^Kr_{lk}(d_l(u))-d_k(u)\,.
\end{equation}
Let 
\begin{equation*}
  \mathbf\Pi_{x,t}(y)=
\sup_{q: q(t)=y}\mathbf \Pi^Q_x(q)\,, \;
\mathbf \Pi_{x,t}(\Gamma)=\sup_{y\in \Gamma}\mathbf\Pi^Q_{x,t}(y)\,,
\text{ where }\Gamma\subset\R_+^K\,.
\end{equation*}
The definition implies the semigroup property that 
\begin{equation}
  \label{eq:30}
  \mathbf\Pi_{x,u+v}(y)=\sup_{z\in\R_+^K}\mathbf\Pi_{x,u}(z)\mathbf\Pi_{z,v}(y)\,.
\end{equation}
For $\Delta\subset\mathcal{K}$\,, we will
denote by $\mathbf1_\Delta$ the vector with unity entries whose 
 dimension equals the number
of elements in $\Delta$\,. For compactness of
notation, we let $\mathbf1=\mathbf1_\mathcal{K}$\,.
\begin{lemma}
  \label{le:renewal_deviab}
Given $\kappa>0$ and  $\epsilon>0$\,, there exists $T>0$ such that
\begin{multline*}
\mathbf \Pi\bl(
\cup_{u\ge T}\{\{a(u)\notin[(\lambda-\epsilon\mathbf1) u,(\lambda+
\epsilon\mathbf1)u]\}\cup\{
s(u)\notin[(\mu-\epsilon\mathbf1) u,(\mu+\epsilon\mathbf1)u]\}\\
\cup\{r(u)\notin[(P-\epsilon I) u,(P+\epsilon I)u]\}\}
\br)<\kappa\,.
\end{multline*}
\end{lemma}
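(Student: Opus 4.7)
The plan is to establish a strong-law-of-large-numbers estimate for each of the independent-increments idempotent processes $a$, $s$, $r$ separately, exploiting the convexity of the instantaneous rate functions $\psi^A_k$, $\psi^S_k$, $\psi^R_k$ and the fact that each has a unique zero at the respective mean $\lambda_k$, $\mu_k$, $(p_{k1},\ldots,p_{kK})$, as is recorded in Remark \ref{re:nul'}. Because in idempotent probability the value on any union equals the pointwise supremum, and because $a$, $s$, $r$ and $q(0)$ are mutually independent under $\mathbf\Pi$, it suffices to bound, for each of the three processes and each $k\in\mathcal{K}$, the supremum over $u\ge T$ of the idempotent probability of a single-$u$ deviation event.

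First, $\psi^A_k$ and $\psi^S_k$ are convex on $\R_+$ (being suprema of functions affine in $\alpha_k$ and $\delta_k$), and $\psi^R_k$ is convex on the set of substochastic rows (by convexity of $\pi$). Hence, for any absolutely continuous $a_k$ with $a_k(0)=0$ and any $u>0$, Jensen's inequality yields
\begin{equation*}
\int_0^u\psi^A_k(\dot a_k(t))\,dt\ge u\,\psi^A_k\Bl(\frac{a_k(u)}{u}\Br),
\end{equation*}
and analogous bounds hold for $s_k$ and componentwise for $r_k$. Moreover, the quantity
\begin{equation*}
\delta(\epsilon):=\min_{k\in\mathcal K}\Bl(\inf_{|\alpha-\lambda_k|\ge\epsilon}\psi^A_k(\alpha)\wedge\inf_{|\gamma-\mu_k|\ge\epsilon}\psi^S_k(\gamma)\wedge\inf_{\max_l|\varrho_l-p_{kl}|\ge\epsilon}\psi^R_k(\varrho)\Br)
\end{equation*}
is strictly positive: each rate function is convex and lower semicontinuous with a unique zero; for $\psi^A_k$ and $\psi^S_k$ superlinear growth at infinity is guaranteed by the finiteness of the moment generating functions of $\xi_k$ and $\eta_k$ near the origin, while the last infimum is over the compact set of substochastic rows.

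Combining the two observations, whenever $|a_k(u)/u-\lambda_k|>\epsilon$ one has $\mathbf\Pi^A_k(a_k)\le e^{-u\delta(\epsilon)}$, and similarly for $s_k$ and $r_k$. Hence, by independence and the union-is-supremum rule, the left-hand side of the lemma is bounded above by $\sup_{u\ge T}e^{-u\delta(\epsilon)}=e^{-T\delta(\epsilon)}$, which is less than $\kappa$ as soon as $T>\delta(\epsilon)^{-1}\log(1/\kappa)$. The only genuinely technical point is the uniform positive lower bound $\delta(\epsilon)>0$, which is a routine property of Cramér-type rate functions; everything else reduces to Jensen's inequality and the definition of the idempotent probability of a union.
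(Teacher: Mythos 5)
Your proof is correct, but it takes a genuinely different route from the paper's. The paper does not manipulate the rate functions $\psi^A_k$, $\psi^S_k$, $\psi^R_k$ directly: after invoking maxitivity exactly as you do, it uses the LD convergence in distribution of $(A(nt)/n,\,t\in\R_+)$ to $a$ to bound $\mathbf\Pi^A(a(u)\notin[(\lambda-\epsilon\mathbf 1)u,(\lambda+\epsilon\mathbf 1)u])$ from above by $\liminf_{n}\mathbf P(|A(nu)/n-\lambda u|>\epsilon u)^{1/n}$, and then applies an exponential Chebyshev estimate for renewal processes (Lemma~\ref{le:ren} in the appendix, attributed to Bell and Williams) to obtain the geometric bound $\sigma^u$. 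You instead stay entirely at the level of the limiting idempotent distribution: Jensen's inequality applied to the convex integrand $\psi^A_k$ gives $\int_0^u\psi^A_k(\dot a_k(t))\,dt\ge u\,\psi^A_k(a_k(u)/u)$, and then a coercivity bound $\delta(\epsilon)>0$ for the rate function away from its unique zero yields the same geometric decay $e^{-u\delta(\epsilon)}$. Your route is self-contained and avoids any appeal to the pre-limit processes or to the auxiliary renewal estimate; the paper's route is shorter in context because the LD convergence and Lemma~\ref{le:ren} are already available for other purposes. Two small points: (i) you do not need independence of $a$, $s$, $r$, $q(0)$ for the reduction --- only that the marginal of $a$ under $\mathbf\Pi$ is $\mathbf\Pi^A$, which the paper uses directly; (ii) $\psi^A_k$ and $\psi^S_k$ have linear (not superlinear) growth at infinity --- for $\theta<0$ in the domain, $\psi^A_k(\alpha)\ge\theta-\alpha\ln\mathbf E\exp(\theta\xi_k)$ with $\ln\mathbf E\exp(\theta\xi_k)<0$ --- but linear growth with positive slope together with convexity, lower semicontinuity and the unique zero at the mean is exactly what is needed for $\delta(\epsilon)>0$, so the conclusion stands.
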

\begin{proof}
By the maxitivity property
that $\mathbf\Pi(\cup_i\Gamma_i)=\sup_{i}\mathbf\Pi(\Gamma_i)$\,,
for arbitrary collection of sets $\Gamma_i$\,,
 it suffices
 to work with $\mathbf \Pi(
a(u)\notin[(\lambda-\epsilon\mathbf1) u,(\lambda+\epsilon\mathbf1)u])$ only.
By the LD convergence in distribution 
of $(A(nt)/n\,,t\in\R_+)$ to $a$ and  Lemma \ref{le:ren} relegated to
the appendix, whose assertion can be 
found in Appendix A of Bell and Williams \cite{BelWil01}, for some $\sigma\in(0,1)$\,,
\begin{multline*}
  \mathbf \Pi(
a(u)\notin[(\lambda-\epsilon\mathbf1) u,(\lambda+\epsilon\mathbf1)u])=
\mathbf\Pi^A(a(u)\notin[(\lambda-\epsilon\mathbf1) u,(\lambda+\epsilon\mathbf1)u])\\\le
\liminf_{n\to\infty}\mathbf P\bl(\abs{\frac{A(nu)}{n}-\lambda u}>\epsilon 
u\br)^{1/n}\le\sigma^u\,.
\end{multline*}
\end{proof}
\begin{lemma}
\label{le:nul}
  Given bounded set $B\subset\R_+^K$ and $\kappa\in(0,1)$\,, there exists
 $\hat T>0$ such that
if $\mathbf\Pi^Q(q)> \kappa$ and $q(0)\in B$\,, then 
$\min_{u\in[0,\hat T]}\abs{q(u)}=0\,.$
\end{lemma}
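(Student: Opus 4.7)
The plan is to combine Lemma~\ref{le:renewal_deviab} with the classical fluid-model stability of subcritical generalised Jackson networks. First I will use~\eqref{eq:13} and continuity to fix $\epsilon>0$ so small that $\mu-\epsilon\mathbf 1>(I-(P+\epsilon I)^T)^{-1}(\lambda+\epsilon\mathbf 1)$, so the $\epsilon$-perturbed parameters still define a subcritical network. For this $\epsilon$ and the given $\kappa$, Lemma~\ref{le:renewal_deviab} delivers $T>0$ such that the ``deviant'' event $\mathcal{E}$ that one of the sandwich inclusions for $a$, $s$, or $r$ fails at some $u\ge T$ satisfies $\mathbf\Pi(\mathcal E)<\kappa$.

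Given any $q$ with $\mathbf\Pi^Q(q)>\kappa$ and $q(0)\in B$, I unfold the marginal idempotent distribution via~\eqref{eq:27} and the product structure of $\mathbf\Pi$ to pick a tuple $(q(0),a,s,r)$ producing $q$ through~\eqref{eq:11}, \eqref{eq:7a} and \eqref{eq:8a} with $\mathbf\Pi$-value strictly above $\kappa$. This tuple must lie outside $\mathcal E$, so the $\epsilon$-sandwich bounds on $(a,s,r)$ hold for every $u\ge T$. Using these bounds together with $b_k(T)\le T$ and the monotonicity of $a$, $s$, $r$, one estimates $q(T)$ by a constant $M=M(B,T,\epsilon,\lambda,\mu,P)$.

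On $[T,\infty)$ the shifted tuple $(q(T+\cdot),a(T+\cdot),s(T+\cdot),r(T+\cdot))$ is an $\epsilon$-perturbation of a fluid trajectory of the subcritical network, started in the bounded set $\{|q|\le M\}$. I plan to apply a linear Lyapunov function $V(q)=\alpha^T q$ with $\alpha^T=\mathbf 1^T(I-P^T)^{-1}$: for the exact fluid model, the subcriticality $\mu>(I-P^T)^{-1}\lambda$ translates into $\dot V=-\mathbf 1^T(\mu-(I-P^T)^{-1}\lambda)<0$ whenever some $q_k>0$, and for $\epsilon$ small the $\epsilon$-perturbation eats at most half of this slack, still giving $\dot V\le -c/2<0$ on $\{|q|>0\}\cap[T,\infty)$. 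Integrating this inequality forces $q$ to hit $0$ by time $T+2\alpha^T M\mathbf 1/c$; taking $\hat T$ to be this bound completes the proof.

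The main obstacle I expect is the Lyapunov step: deriving $\dot V(q(u))\le -c/2$ from the $\epsilon$-perturbed driving data while respecting the idempotent Skorokhod condition~\eqref{eq:8a} that forces $\dot b_k=1$ only when $q_k>0$ and lets $\dot b_k$ stall at empty stations. The cleanest fix, I anticipate, is to invoke the Lipschitz continuity of the idempotent Skorokhod map (the same continuity underlying the derivation of~\eqref{eq:6a}--\eqref{eq:8a}) to dominate the trajectory pathwise by a solution of the extremal fluid model with parameters $(\lambda+\epsilon\mathbf 1,\mu-\epsilon\mathbf 1,P+\epsilon I)$---itself subcritical by the choice of $\epsilon$ in Step~1---whose drain-time bound from the bounded state $\{|q|\le M\}$ is classical \cite{MR1106809}.
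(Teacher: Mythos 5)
Your overall strategy---use Lemma~\ref{le:renewal_deviab} to confine $(a,s,r)$ to an $\epsilon$-tube around their fluid trajectories on $\{u\ge T\}$, then run a linear Lyapunov argument with weight $\mathbf 1^T(I-P^T)^{-1}$ (or its $\epsilon$-perturbation) to force $q$ to drain---is exactly the paper's strategy, and you correctly identify the dangerous step: making $\dot V<0$ rigorous despite the idempotent Skorokhod condition~\eqref{eq:8a} letting $\dot b_k$ stall at empty stations. However, the ``cleanest fix'' you offer does not close the gap.

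Two concrete problems. First, pathwise domination of $q$ by a solution of an ``extremal'' fluid model with parameters $(\lambda+\epsilon\mathbf 1,\mu-\epsilon\mathbf 1,P+\epsilon I)$ is not a consequence of Lipschitz continuity of the oblique reflection (Skorokhod) map; Lipschitz continuity gives stability in sup-norm, not monotone comparison. Monotonicity of the multidimensional reflection map in the inputs is a genuinely nontrivial property and is not what is established in \cite{MR1106809}, so you would have to prove it separately, and the nonlinearity of the time change $s_k(b_k(\cdot))$ makes this more delicate still. Second, even granting the sandwich bounds for $u\ge T$, the quantity entering~\eqref{eq:7a} is $s_k(b_k(u))$, and the bound $s_k(w)\ge(\mu_k-\epsilon)w$ is only available for $w\ge T$; you have not argued that $b_k(u)\ge T$ eventually. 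The paper devotes a separate step to producing $S\ge 2T$ with $b_k(S)\ge T$ on $\Gamma_\kappa$, using~\eqref{eq:8a} together with the lower bound on $a_k$, and only then can the Lyapunov estimate~\eqref{eq:7} be started. Your proposal skips this.

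The paper handles the boundary behaviour directly rather than by domination: when $q$ is piecewise linear and $O$ is the set of indices with $q_k=0$ on an interval, the constraint~\eqref{eq:20} together with the matrix identity $\lambda_O=(I-P^T)_{OO}\nu_O-P^T_{OO^c}\nu_{O^c}$ yields a lower bound on $d_O(u)$, and combining this with the lower bound~\eqref{eq:23} on $d_{O^c}(u)$ gives a uniform (in $O$) linear decrease of $\mathbf 1\cdot(I-P^T-\epsilon I)^{-1}q(u)$. The reduction to piecewise linear $q$ is carried out by a separate approximation step (Lemmas 4.1--4.4 of \cite{Puh07}), which your proposal also omits. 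So the skeleton of your argument is right, but the Lyapunov inequality---the heart of the lemma---would need to be carried out essentially as in the paper; the shortcut via Skorokhod-map domination is not available as stated.
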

\begin{proof}
The proof proceeds by establishing, initially,  that
in the long run the
idempotent processes 
$a(t)$\,, $s(t)$ and $r(t)$ ''with great deviability'' stay close to
the corresponding fluid trajectories $\lambda t$\,, $\mu t$ and
$Pt$\,, respectively. Then, drawing on the proof  of the stability of
fluid models of queueing networks in
Bramson \cite{Bra06,Bra08},
it is shown that owing to condition \eqref{eq:13}  the function
$\mathbf1\cdot(I-P^T-\epsilon I)^{-1}q(u)$ 
decreases linearly with $u$\,,  provided $\epsilon$ is small enough,
which implies that the function must attain
$0$\,.

By \eqref{eq:13}, there exists
$\epsilon>0$ such that
$(I-P^T-\epsilon I)^{-1}(\lambda+\epsilon\mathbf1)\le\mu-\epsilon\mathbf1$
and $(I-P^T+\epsilon
I)^{-1}(\lambda-\epsilon\mathbf1)\le\mu-\epsilon\mathbf1$\,.
(In the course of the proof, potentially smaller $\epsilon$ will be
needed. Yet, there exists $\epsilon$ that satisfies all the
requirements.
Importantly, it depends neither on $\kappa$ nor on $B$\,.)
By Lemma \ref{le:renewal_deviab}, there exists $T>0$ such that
$\mathbf \Pi(a(u)\ge(\lambda+\epsilon\mathbf1)u \text{ for some }u\ge
T)<\kappa$\,,
$\mathbf \Pi(a(u)\le(\lambda-\epsilon\mathbf1)u \text{ for some }u\ge T)
<\kappa$\,,
$\mathbf \Pi(s(u)\le(\mu-\epsilon\mathbf1)u \text{ for some }u\ge
T)<\kappa$\,,
$\mathbf \Pi(s(u)\ge(\mu+\epsilon\mathbf1)u \text{ for some }u\ge T)<\kappa$\,,
$\mathbf \Pi(r(u)\ge(P+\epsilon I)u\text{ for some }u\ge
T)<\kappa$\,, 
and 
$\mathbf \Pi(r(u)\le(P-\epsilon I)u\text{ for some }u\ge T)<\kappa$\,.

Let
\begin{multline*}
  \Gamma_\kappa=\{\upsilon:\,
(\lambda-\epsilon\mathbf1)u\le a(u)\le(\lambda+\epsilon\mathbf1)u\,,
(P-\epsilon I)u\le r(u)\le(P+\epsilon I)u\,,\\
\text{ and }
(\mu-\epsilon\mathbf1)u\le s(u)\le (\mu+\epsilon\mathbf1)u\,,
\text{ for }u\ge T\}.
\end{multline*}
We have that $\mathbf\Pi(\Gamma_\kappa^c)<\kappa$ and that
on $\Gamma_\kappa$\,,
provided $u\ge T$\,, by \eqref{eq:6a}, 
\begin{align}
  \label{eq:16}
  q(0)+(\lambda-\epsilon\mathbf1)u+(P^T-\epsilon I)d(u)-d(u)\le q(u)
\le q(0)+(\lambda+\epsilon\mathbf1)u+(P^T+\epsilon I)d(u)-d(u)\,.
\end{align}

Let us show that there exists $S\ge 2T$ 
 such that $b_k(S)\ge T$ for all $k$ on $\Gamma_\kappa$\,.
Intuitively, this is the case because otherwise some
$s_k(b_k(u))$ would be ''bounded'' whereas $a_k(u)$ can be arbitrarily
great for great $u$ pushing  $b_k(u)$ past $T$\,. 
Formally, assuming that $\epsilon<\lambda_k$\,, for all $k$\,,
 let $S\ge 2T$ be such that
$(\lambda_k-\epsilon)(S-T)-(\mu_k+\epsilon)T>0$\,, for all $k$\,.
If $b_k(S)< T$\,, for some $k$\,, then, by \eqref{eq:7a},
 $d_k(u)\le s_k(T)$ for 
$u\in[0,S]$\,.
By \eqref{eq:11}, on $\Gamma_\kappa$\,,
for $u\in[ S-T,S]$\,,
$q_k(u)\ge a_k(u)-s_k(T)\ge (\lambda_k-\epsilon)u-(\mu_k+\epsilon)T
>0$\,. 
Therefore, by \eqref{eq:8a},
$\dot b_k(u)=1$ a.e. when $u\in[S-T, S]$\,, so  $b_k(S)\ge T$\,, 
which contradicts the
assumption that $b_k(S)< T$\,. It is worth noting that
whereas both
$T$ and $S$ may depend on either $\epsilon$ or $\kappa$\,,
 neither of them  depends on $q(0)$\,.

We now assume that $q$ is piecewise linear, which assumption is to be
disposed of later.
Let us suppose that $q_k(u)>0$  for $u$ in a right neighborhood of $S$
  for some $k$  on $\Gamma_\kappa$\,.
Then,  $b_k(u)=b_k(S)+u-S\ge T+ u-S$\,, for $u\ge S$\,,
until $q_k(u)$ hits zero.
Accordingly, $d_k(u)=s_k(b_k(u))\ge (\mu_k-\epsilon)(u-S)$\,.  Hence,
if $q(u)>0$ entrywise in a right neighborhood of $S$\,, then
\begin{multline*}
  d(u)\ge (I- (P^T+\epsilon I))^{-1}(\lambda+\epsilon\mathbf1)(u-S)=
\bl(\nu+\epsilon(I- (P^T+\epsilon I))^{-1}(\nu+\mathbf1)\br)(u-S)\,,
\end{multline*}
where we denote
\begin{equation}
  \label{eq:15}
  \nu=(I-P^T)^{-1}\lambda\,.
\end{equation}
As a consequence, for some $\rho>0$\,, which is dependent on
$\epsilon$ only,
while $q(u)>0$ entrywise,
\begin{equation}
  \label{eq:22}
  d(u)\ge(\nu+\rho\mathbf1)(u-S)\,.
\end{equation}
By the righthand inequality in \eqref{eq:16} and \eqref{eq:15},  
\begin{multline}
  \label{eq:8}
    \mathbf1\cdot(I-P^T-\epsilon I)^{-1}q(u)\le
  \mathbf1\cdot(I-P^T-\epsilon I)^{-1}q(0)
+
\mathbf1\cdot(I-P^T-\epsilon I)^{-1}(\lambda+\epsilon \mathbf1)u-\mathbf1\cdot d(u)\\
=\mathbf1\cdot(I-P^T-\epsilon I)^{-1}q(0)
+\mathbf1\cdot(\nu u-d(u))+\epsilon \mathbf1\cdot(I-P^T)^{-1}\mathbf1 u +
\epsilon \mathbf1\cdot(I-P^T-\epsilon I)^{-1}(I-P^T)^{-1}(\lambda+\epsilon \mathbf1)u\,.
\end{multline}
By \eqref{eq:22}, there exist $\hat\rho>0$ and $\gamma>0$ such that, provided
$\epsilon$ is small enough, if $ u\ge S$\,, then, while
 $q(u)$ stays entrywise positive,
\begin{equation}
  \label{eq:7}
  \mathbf1\cdot(I-P^T-\epsilon I)^{-1}q(u)\le
\mathbf1\cdot(I-P^T-\epsilon I)^{-1}q(0)+\gamma S-\hat\rho u\,.
\end{equation}
  
Let us show that similar inequalities hold on $\Gamma_\kappa$ for all $u\ge S$\,.
Given $v\ge S$\,,
let $O$ denote a possibly empty set of indices $k$ such that 
 $q_k(u)=0$ on some interval $[v,v+\eta]$ 
and $q_k(u)>0$ if $k\notin O$ and $u\in(v,v+\eta)$\,.
Such  $\eta$ exists because $q(u)$ is piecewise linear.
We assume that $q(u)\not=0$ on $[v,v+\eta]$\,, so
 $O$ is a proper subset of $\mathcal{K}$\,.
By  the lefthand inequality in  \eqref{eq:16}, on $\Gamma_\kappa$\,,
\begin{equation}
  \label{eq:20}
  q_k(0)+(\lambda_k-\epsilon)u+((P^T-\epsilon I)d(u))_k-d_k(u)\le0\,,
  \text{ for } k\in O\,, u\in[v,v+\eta]\,.
\end{equation}
Therefore, using subscript $O$ and $O^c$ to denote restrictions of vectors to
indices in $O$ and $O^c$ respectively, 
and using subscripts $OO$ and $OO^c$ to denote restrictions of
matrices to entries with both indices in $OO$ and $OO^c$\,, respectively, we have that
\begin{equation*}
  d_O(u)\ge q_O(0)+(\lambda_O-\epsilon\mathbf 1_O)u
+(P^T-\epsilon I)_{OO}d_O(u)+(P^T-\epsilon I)_{OO^c}d_{O^c}(u)\,,
\end{equation*}
so, assuming $\epsilon$ is small enough,
\begin{equation}
  \label{eq:26}
    d_O(u)\ge(I-(P^T-\epsilon I))_{OO}^{-1}
\bl(q_O(0)+(\lambda_O-\epsilon\mathbf 1_O)u+(P^T-\epsilon I)_{OO^c}d_{O^c}(u)\br)\,.
\end{equation}

On the other hand, by \eqref{eq:15},
$  \lambda_O=(I-P^T)_{OO}\nu_O-P^T_{OO^c}\nu_{O^c}\,.
$ Substitution in \eqref{eq:26} and rearranging yield
\begin{multline*}
    d_O(u)
\ge\nu_Ou
+(I-(P^T-\epsilon I))_{OO}^{-1}
\bl(P^T_{OO^c}(d_{O^c}(u)-\nu_{O^c}u)-\epsilon\nu_Ou
-\epsilon\mathbf1_Ou-\epsilon
I_{OO^c}d_{O^c}(u)\br))\,.
\end{multline*}
In analogy with the derivation of \eqref{eq:22}, one obtains that, for
some $\rho_O>0$\,,
\begin{equation}
  \label{eq:23}
  d_{O^c}(u)\ge (\nu_{O^c}+\rho_{O}\mathbf1_{O^c})(u-S)\,.
\end{equation}
Therefore,
for $u\in[v,v+\eta]$\,,
\begin{multline}
  \label{eq:21}
d_O(u)\ge \nu_Ou
+(I-(P^T-\epsilon I))_{OO}^{-1}
\bl(-P^T_{OO^c}(\nu_{O^c}+\rho_{O}\mathbf1_{O^c})S-\epsilon\nu_Ou
-\epsilon\mathbf1_Ou-\epsilon
I_{OO^c}d_{O^c}(u)\br)\,.
\end{multline}
Since $O^c\not=\emptyset$\,, by \eqref{eq:23}, \eqref{eq:21} and the
bound $d(u)\le(\mu+\epsilon)u$ when $u\ge S$\,, 
 there exist $\tilde\rho>0$ and $\gamma>0$ which do not depend on $O$ 
 such that, 
assuming  $\epsilon$ is small enough, for $u\in[v,v+\eta]$\,,
\begin{equation*}
  \mathbf1\cdot d(u)=\mathbf1_O\cdot d_O(u)+\mathbf1_{O^c}\cdot d_{O^c}(u)
\ge \mathbf1\cdot\nu u+\tilde\rho u-\gamma S\,.
\end{equation*}
By \eqref{eq:8}, we obtain that  \eqref{eq:7} still holds,
 for suitable $\hat\rho>0$ which does not depend on $O$ and $u\in[v,v+\eta]$\,,
provided $\epsilon$ is small enough.
We can repeat the same argument over and over again, so, \eqref{eq:7}
holds until $q(u)=0$\,. Hence, one can take
\begin{equation*}
  \hat T=\frac{1}{\hat\rho}\,(
\abs{(I-P^T-\epsilon I)^{-1}\mathbf1}\sup_{x\in B}\abs{x}+\gamma S)
\end{equation*}
as the time by which $q$ is bound to hit the origin.

Suppose now that $q$ is not necessarily piecewise linear and
$\mathbf\Pi^Q(q)> \kappa$\,. 
By 
Lemmas 4.1--4.4 in  Puhalskii \cite{Puh07}, there exist
 piecewise linear  $q^n$ which converge
to $q$ as $n\to\infty$
 such that $\mathbf\Pi^Q(q^n)\to\mathbf\Pi^Q(q)$\,. By what's
been proved,
there exist $t^n$ from $[0,\hat T]$ such that $\abs{q^n(t^n)}=0$\,.
Since $\abs{q^n(t^n)-q(t^n)}\to0$\,, it follows that $\abs{q(t')}=0$ where
$t'$ represents a subsequential limit of the $t^n$\,.
\end{proof}
\begin{theorem}
  \label{le:predel}
There exists deviability $\hat{\mathbf \Pi}$ on $\R_+^K$ such
that, for every bounded set  $B\subset \R_+^K$\,, 
\begin{equation}
  \label{eq:9}
  \lim_{t\to\infty}\sup_{x\in B}\sup_{y\in\R^K_+}\abs{
\mathbf\Pi_{x,t}(y)-\hat{\mathbf\Pi}(y)}=0\,.
\end{equation}
Furthermore, given $y$\,, $\mathbf\Pi_{x,t}(y)=\hat{\mathbf\Pi}(y)$ for all
$t$ great enough and all $x\in B$\,.
The deviability $\hat{\mathbf\Pi}$ is a unique stationary  deviability for
the semigroup $\mathbf\Pi_{x,t}$ meaning that, for all $y\in\R_+^K$
and $t\in\R_+$\,,
\begin{equation*}
  \hat{\mathbf\Pi}(y)=\sup_{x\in\R_+^K}\mathbf\Pi_{x,t}(y)\hat{\mathbf\Pi}(x)\,.
\end{equation*}
\end{theorem}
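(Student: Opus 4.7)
\smallskip

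The plan is to identify the limit as $\hat{\mathbf\Pi}(y) := \sup_{v \in \R_+} \mathbf\Pi_{0, v}(y)$ and to establish the stronger finite-time identity $\mathbf\Pi_{x, t}(y) = \hat{\mathbf\Pi}(y)$ once $t$ is large enough (depending on $y$, but uniformly in $x \in B$). Both inequalities will come from the semigroup property \eqref{eq:30} combined with the no-cost structure at the origin (Remark \ref{re:nul'} gives $L(0,0)=0$ and subcriticality \eqref{eq:13} makes the fluid drain from any bounded set costless), while Lemma \ref{le:nul} provides the coupling-type mechanism that forces every path of non-negligible deviability to revisit $0$.

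\emph{Lower bound.} Since \eqref{eq:13} together with Remark \ref{re:nul'} makes the draining fluid trajectory from any $x \in B$ costless, one has $\mathbf\Pi_{x, s_0}(0) = 1$ for some $s_0 = s_0(B)$, uniformly in $x \in B$. Given $\epsilon > 0$, pick a near-optimizer $v$ with $\mathbf\Pi_{0, v}(y) > \hat{\mathbf\Pi}(y) - \epsilon$ and concatenate: drain $x$ to $0$, sit at $0$ (zero additional cost), then follow the near-optimizer. The semigroup property \eqref{eq:30} gives
\begin{equation*}
\mathbf\Pi_{x, t}(y) \ge \mathbf\Pi_{x, t-v}(0)\, \mathbf\Pi_{0, v}(y) \ge \hat{\mathbf\Pi}(y) - \epsilon \quad \text{for all } t \ge s_0 + v.
\end{equation*}

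\emph{Upper bound.} Fix $\kappa > 0$ and let $\hat T = \hat T(B, \kappa)$ come from Lemma \ref{le:nul} (whose argument applies verbatim with $\tilde{\mathbf\Pi}^{Q_0}$ a point mass at $x$, i.e.\ for $\mathbf\Pi^Q_x$). If $q$ satisfies $q(0) = x \in B$, $q(t) = y$, $t \ge \hat T$, and $\mathbf\Pi^Q_x(q) > \kappa$, then Lemma \ref{le:nul} furnishes $\tau \in [0, \hat T]$ with $q(\tau) = 0$. Splitting the cost integral at $\tau$ and exponentiating,
\begin{equation*}
\mathbf\Pi^Q_x(q) \le \mathbf\Pi_{x, \tau}(0)\, \mathbf\Pi_{0, t - \tau}(y) \le \hat{\mathbf\Pi}(y).
\end{equation*}
Taking the supremum over $q$ yields $\mathbf\Pi_{x, t}(y) \le \max\{\kappa, \hat{\mathbf\Pi}(y)\}$ for $t \ge \hat T(B, \kappa)$; sending $\kappa \downarrow 0$ (which sends $\hat T \to \infty$) gives the upper bound for each $y$ with $\hat{\mathbf\Pi}(y) > 0$.

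\emph{Uniformity, stationarity, uniqueness.} To promote the pointwise finite-time identity to \eqref{eq:9}, split $\R_+^K$ into $\{y : \hat{\mathbf\Pi}(y) \ge \epsilon\}$ and its complement. The first set is compact once $\hat{\mathbf\Pi}$ is shown to be a deviability, and on it the near-optimizing $v$ can be chosen uniformly, so the $t$ needed for equality is uniform. On the complement, both $\hat{\mathbf\Pi}(y)$ and $\mathbf\Pi_{x, t}(y)$ are bounded by $\epsilon$ for $t$ beyond $\hat T(B, \epsilon)$. To verify that $\hat{\mathbf\Pi}$ is itself a deviability, one establishes tightness of the family $\{\mathbf\Pi_{x, t}\}$ by bounding $|q(u)|$ through $|x| + |a(u)|$ and invoking Lemma \ref{le:renewal_deviab}, then applies the Prohorov-type theorem quoted in Section~3. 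Stationarity follows from the finite-time equality applied to $x$ in the compact level set $\{\hat{\mathbf\Pi} \ge \kappa\}$ together with $\hat{\mathbf\Pi}(0) = 1$ (the constant zero path is costless), and uniqueness is identical: any competing stationary $\tilde{\mathbf\Pi}$ has a compact level set where the finite-time equality applies, forcing $\tilde{\mathbf\Pi}(y) = \hat{\mathbf\Pi}(y) \sup_x \tilde{\mathbf\Pi}(x) = \hat{\mathbf\Pi}(y)$.

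The main obstacle is not the two matching bounds, which are essentially a bookkeeping exercise once one has Lemma \ref{le:nul} and the zero-cost structure at the origin, but rather the upgrade to uniform convergence in $y$ and the tightness argument that certifies $\hat{\mathbf\Pi}$ is a deviability with compact level sets.
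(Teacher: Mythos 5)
Your overall plan---define $\hat{\mathbf\Pi}$ as the supremum of $\mathbf\Pi_{0,v}(y)$ over $v$, establish the two-sided bound by draining to the origin (lower bound) and invoking Lemma~\ref{le:nul} (upper bound), then promote to uniform convergence, stationarity and uniqueness---is the same skeleton as the paper's. However, there are two genuine gaps, both tracing to the same missing ingredient: the paper's ``leveling-off'' argument for $\mathbf\Pi_{0,t}(y)$.

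The first gap is the tightness step. You propose to show that the family $\{\mathbf\Pi_{x,t}\}$ is tight by ``bounding $|q(u)|$ through $|x|+|a(u)|$ and invoking Lemma~\ref{le:renewal_deviab}.'' This cannot work: from \eqref{eq:6a}, $\mathbf1\cdot q(t)\le\mathbf1\cdot x+\mathbf1\cdot a(t)$ is indeed valid, but Lemma~\ref{le:renewal_deviab} only gives $a(t)\approx\lambda t$ with high deviability, so your bound on $|q(t)|$ grows linearly in $t$ and provides no control as $t\to\infty$. What the paper proves instead---via the optimality of the path, the excursion back to $0$ supplied by Lemma~\ref{le:nul}, and Remark~\ref{re:nul'} (a zero-cost return to $0$ from $0$ must stay at $0$)---is the much stronger fact that $\mathbf\Pi_{0,t}(y)\le\kappa\vee\mathbf\Pi_{0,t'+1}(y)$ for \emph{all} $t$ and $y$, which yields tightness immediately because $\mathbf\Pi_{0,t'+1}$ is already a deviability. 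You need something of this form; the crude pathwise bound does not substitute.

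The second gap is the finite-time identity. The theorem asserts $\mathbf\Pi_{x,t}(y)=\hat{\mathbf\Pi}(y)$ exactly for $t$ large, and you announce this as your goal, but your lower bound only yields $\mathbf\Pi_{x,t}(y)\ge\hat{\mathbf\Pi}(y)-\epsilon$ via a ``near-optimizer $v$,'' and in the uniformity paragraph you again invoke a ``near-optimizing $v$.'' Near-optimality never upgrades to equality: combined with your upper bound $\mathbf\Pi_{x,t}(y)\le\max\{\kappa,\hat{\mathbf\Pi}(y)\}$, you get $\hat{\mathbf\Pi}(y)-\epsilon\le\mathbf\Pi_{x,t}(y)\le\hat{\mathbf\Pi}(y)$, i.e.\ convergence, but not the asserted equality. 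To get equality one must first show that the nondecreasing function $t\mapsto\mathbf\Pi_{0,t}(y)$ actually becomes \emph{constant} once it exceeds any $\kappa$; the paper does this by taking an optimizing path $q$ from $0$ to $y$, using Lemma~\ref{le:nul} to locate a return $q(\breve t)=0$, shifting to see $\int_0^{\breve t}L(q,\dot q)\,ds=0$, and then using Remark~\ref{re:nul'} to force $q\equiv 0$ on $[0,\breve t]$ so that $\mathbf\Pi_{0,t-1}(y)=\mathbf\Pi_{0,t}(y)$. Without this, the supremum defining $\hat{\mathbf\Pi}(y)$ need not be attained at any finite $v$, and the finite-time identity, the uniformity in $y$ over the compact level set, and the semigroup/stationarity conclusions all lose their footing. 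Fixing both gaps essentially means reproducing the paper's leveling-off lemma, which is the real content of the proof beyond Lemma~\ref{le:nul}.
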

\begin{proof}
  One can see  that $\mathbf\Pi_{0,t}(y)$ is a
nondecreasing function of $t$\,. Indeed, let $u\le t$\,.
Given function $q$  such that
$q(0)=0$ and $q(u)=y$\,, we can associate with it function $\tilde q$
such that $\tilde q(v)=0$ for $v\in[0, t-u]$ and $\tilde q(v)=q(v-(t-u))$
for $v\in[u,t]$\,.  It follows that
$\int_0^{t-s}L(\tilde q(r),\dot{\tilde q}(r))\,dr=0$\,, so 
$\mathbf I_0(\tilde q)=\mathbf I_0(q)$ yielding the desired monotonicity.
We let
\begin{equation}
  \label{eq:10}
  \hat{\mathbf \Pi}(y)=\lim_{t\to\infty}\mathbf\Pi_{0,t}(y)\,.
\end{equation}
Let us show that
 $\mathbf\Pi_{0,t}(y)$ levels off eventually as a
function of $t$\,. 
Let $\kappa>0$\,.
We define $ t'$ as $\hat T$ in the statement of Lemma \ref{le:nul}
with  $\{x\in\R_+^K:\,\abs{x}\le1\}\, $ as set $B$\,.
Suppose that $\mathbf\Pi_{0,t}(y)>\kappa$\,, 
where $t\ge t'+1$\,. Let
$q$ be such that $q(0)=0$\,, $q(t)=y$ and
   $\mathbf \Pi_0^Q(q)=\mathbf\Pi_{0,t}(y)$\,.
 Let $\tilde
t=\inf_{t:\,q(t)\ge1}\wedge1$\,. Then $q(\tilde t)\le1$ and $0<\tilde
t\le1$\,. By Lemma \ref{le:nul}, there exists
$\breve t\in[\tilde t, t'+1]$ such that $q(\breve t)=0$\,. On defining $\tilde
q(s)=q(s+\breve t)$\,, we have that $\mathbf\Pi_0^Q(\tilde q)\ge
\mathbf\Pi^Q_0( q)$\,. On the other hand, since $ \breve t\le t$\,,
we have that $\tilde q(t-\breve t)=y$
which implies  that
$\mathbf\Pi_0^Q(\tilde q)\le \mathbf\Pi_{0,t-\breve t}(y)\le
\mathbf \Pi_{0,t}(y)=\mathbf\Pi_0^Q(q)$\,, so, $q(u)=0$ on
$[0,\breve t]$\,, for Remark \ref{re:nul'} implies that if
$q(0)=0$ and $q(u)=0$ for some $u>0$\,, then
$\int_0^u L(q(v),\dot q(v))\,dv=0$ if and only if 
$q(v)=0$ on $[0,u]$\,. Hence, $\breve t=\tilde t=1$\,, so,
 $\mathbf\Pi_{0,t-1}(y)=\mathbf\Pi_{0,t}(y)$\,. This proves that if 
$\mathbf\Pi_{0,t}(y)>\kappa$ and $t\ge t'+1$\,, then 
$\mathbf\Pi_{0,t'+1}(y)=\mathbf\Pi_{0,t}(y)$\,.
We also have that 
$\mathbf\Pi_{0,t}(y)\le \kappa\vee\mathbf\Pi_{0,t'+1}(y)$\,, for all
$t$
and $y$\,. Hence,
 the net of deviabilities $\mathbf\Pi_{0,t}$ is tight, so,
$\hat{\mathbf \Pi}$ is a deviability too.

Let us prove that
\begin{equation}
  \label{eq:28}
  \lim_{t\to\infty}\sup_{x\in B}\sup_{y\in\R^K_+}\abs{
\mathbf\Pi_{x,t}(y)-\mathbf\Pi_{0,t}(y)}=0\,.
\end{equation}
A coupling argument is employed.
We prove, at first, that, for arbitrary $\kappa>0$\,, 
\begin{equation}
  \label{eq:31}
  \mathbf\Pi_{x,t}(y)\le  \mathbf\Pi_{0,t}
(y)+\kappa\text{ for all } x\in B \text{ and } 
y\in \R^K\text{ provided $t$ is great enough}\,.
\end{equation}
By Lemma \ref{le:nul}, there exists $\hat T$ such that 
if  $q(0)\in B$ and 
$\mathbf\Pi^Q(q)> \kappa$\,, then $q(u)=0$ for some $u\in[0,\hat T]$\,.
Let us fix $x\in B$ and $y\in\R_+^K$\,.
One can assume that $t\ge \hat T$ and that $\mathbf\Pi_{x,t}(y)>\kappa$\,.
Let trajectory $\hat q$ be such that $\hat q(0)=x$\,, $\hat q(t)=y$ and
$  \mathbf\Pi_{x,t}(y)=\mathbf\Pi^Q(\hat q)\,$.
By Lemma \ref{le:nul}, there exists $\hat T_1\in[0,\hat T]$  such that 
$\hat q(\hat T_{1})=0$\,.
We define $\tilde q$ by letting $\tilde q(u)=0$ when $u\le \hat
T_{1}$ and $\tilde q(u)=\hat q(u)$ when $u\ge \hat T_{1}$\,.
By Remark \ref{re:nul'},
$  \mathbf\Pi^Q(\hat q)\le \mathbf\Pi^Q(\tilde q)\le \mathbf\Pi_{0,t}
(y)\,,
$ proving \eqref{eq:31}.

On the other hand, given  $t\ge \hat T+1$\,, $x\in B$\,,
and $ q$ such that $ q(0)=0$\,, $
q(t)=y$\,, 
$\mathbf\Pi^Q( q)=\mathbf\Pi_{0,t}(y)> \kappa$\,,
and $ q(u)=0$ for all $u\in[0,\hat T]$ (the latter can be always
assumed as we have seen),
 we define 
$\hat q$ with $\hat q(0)=x$ 
by letting it follow the law of large numbers until it hits
zero at some $\hat T_1\in[0,\hat T]$ and by letting 
$\hat q(u)= q(u-\hat T_1)$\,, for $u\ge \hat T_1$\,.
Since $\mathbf\Pi^Q(\hat q)=\mathbf\Pi^Q( q)$ by Remark \ref{re:nul'}\,, we obtain that
$\mathbf\Pi_{0,t}(y)= \mathbf\Pi^Q(\hat q)\le \mathbf \Pi_{x,t}(y)$\,,
which concludes the proof of \eqref{eq:28}.

We have shown that $\mathbf\Pi_{x,t}(y)\to\hat{\mathbf\Pi}(y)$\,, as
$t\to\infty$\,, uniformly over $y\in\R_+^K$ and 
over $x$ from bounded sets. 
It follows that, for arbitrary initial deviability $\tilde{\mathbf\Pi}^{Q_0}$\,,
\begin{equation*}
  \lim_{t\to\infty}\sup_{y\in\R_+^K}\abs{\mathbf\Pi(q(t)=y)-\hat{\mathbf\Pi}(y)}=0\,.
\end{equation*}
Letting $u\to\infty$ in \eqref{eq:30} implies that $\hat{\mathbf\Pi}$
is a unique stationary initial deviability.
(For, if $\mathbf\Pi'$ is another stationary deviability, then
$\abs{\hat{\mathbf\Pi}(y)-\mathbf\Pi'(y)}=
\abs{\hat{\mathbf\Pi}(y)\sup_{x\in\R_+^K}\mathbf\Pi'(x)-
\sup_{x\in\R_+^K}\mathbf\Pi'(x)\mathbf\Pi_{x,t}(y)}\le
\sup_{x\in\R_+^K}\abs{\mathbf\Pi'(x)\hat{\mathbf\Pi}(y)
-\mathbf\Pi'(x)\mathbf\Pi_{x,t}(y)}
\le 
\sup_{\substack{x\in\R_+^K\,:\\\mathbf\Pi'(x)\ge\kappa}}\abs{\hat{\mathbf\Pi}(y)
-\mathbf\Pi_{x,t}(y)}\vee\kappa$\,, where
$\kappa\in(0,1]$\,, and one can let $t\to\infty$\,.)

\end{proof}
\begin{remark}
  The proof shows that the value of $t$ where the
  $\mathbf\Pi_{0,t}(y)$ level off can be chosen uniformly over $y$
  such that $\hat{\mathbf\Pi}(y)\ge\kappa$\,.
\end{remark}
\begin{proof}[Proof of Theorem \ref{the:stat_LDP}]
Let $\mathbf Q^n$ denote the distribution of $\hat Q/n$ and let
$\mathbf Q^n_{0,t}$
denote the distribution of $Q(nt)/n$\, for $Q(0)=0$\,. Let
 $H\subset \R^K$ be a $\hat{\mathbf\Pi}$--continuity set.
We have that 
\begin{equation}
  \label{eq:6}
\abs{ \mathbf Q^n(H)^{1/n}-\hat{\mathbf \Pi}(H)}\le
\abs{\mathbf Q^n(H)-\mathbf Q_{0,t}^n(H)}^{1/n}+
 \abs{\mathbf Q_{0,t}^n(H)^{1/n}-\mathbf \Pi_{0,t}(H)}
+\abs{\mathbf \Pi_{0,t}(H)-\hat{\mathbf \Pi}(H)}\,.
\end{equation}
By 
Theorem 4.1 in Down and Meyn \cite{MeyDow94},
 there exist $A>1$ and
$\rho\in(0,1)$ such that
$  \abs{\mathbf Q^n(H)-\mathbf Q^n_{0,t}(H)}
\le A\rho^{ nt}\,.
$ Given $\epsilon>0$\,, let
 $t$ be such that
$A\rho^{ t}<\epsilon$ and $\abs{\mathbf \Pi_{0,t}(H)-
\hat{\mathbf \Pi}(H)}<\epsilon$\,.
Since, by Theorem \ref{the:gen_jack}, for all $n$ great enough,
$
\abs{\mathbf Q_{0,t}^n(H)^{1/n}-\mathbf \Pi_{0,t}(H)}<\epsilon$\,, it
follows that
$\abs{\mathbf
Q^n(H)^{1/n}-\hat{\mathbf \Pi}(H)}<3\epsilon$\,, for all $n$ great enough.
(Alternatively, one may let $n\to\infty$ and then let
$t\to\infty$ in \eqref{eq:6}.)
Finally, $\hat{\mathbf\Pi}(x)=e^{-\mathbf V(x)}$ by \eqref{eq:4} and \eqref{eq:10}.
\end{proof}
\begin{remark}
  Since $\mathbf\Pi_{0,t}(H)\uparrow \hat{\mathbf \Pi}(H)$\,, as
  $t\to\infty$\,, one can
  see by \eqref{eq:6}, that, more generally, geometric ergodicity of $\mathbf
  Q_{0,t}$\,, as $t\to\infty$\,, for the metric of total variation and
  a sample path LDP for $(Q_{nt}/n\,,t\ge0)$ with $Q_0=0$\,, imply an LDP for
  $\mathbf Q^n$\,.
\end{remark}
\appendix
\section{Appendix}
\label{sec:app}

\begin{lemma}
\label{le:ren}  Let $(N(t)\,,t\in\R_+)$ be a renewal process with rate $\lambda$\,. Suppose
  that certain exponential moments
of the inter-renewal times are 
  finite.
Then, given arbitrary $\epsilon>0$\,, there exists $\sigma\in(0,1)$
such that, for all $t\in\R_+$\,,
\begin{equation*}
  \limsup_{n\to\infty}
\mathbf P(\abs{\frac{N_{nt}}{n}-\lambda t}>\epsilon t)^{1/n}\le\sigma^t\,.
\end{equation*}
\end{lemma}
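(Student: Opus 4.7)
The plan is to reduce this to Cram\'er-type bounds on the partial sums of the inter-renewal times. Write $S_m = T_1 + \cdots + T_m$, where the $T_i$ are i.i.d.\ copies of a generic inter-renewal time $T$ with $\mathbf{E} T = 1/\lambda$ and $\mathbf{E} e^{\theta T} < \infty$ for $\theta$ in some neighbourhood of zero, and exploit the standard renewal duality $\{N_s \ge m\} = \{S_m \le s\}$.

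For the upper tail, take $m = \lceil n(\lambda+\epsilon)t \rceil$ and apply the Chernoff bound with parameter $\theta > 0$:
\begin{equation*}
\mathbf{P}\bl(N_{nt}/n \ge (\lambda+\epsilon)t\br) \le \mathbf{P}(S_m \le nt) \le e^{\theta nt}(\mathbf{E} e^{-\theta T})^m.
\end{equation*}
Taking $n$-th roots and letting $n \to \infty$ turns the right side into $\bl(e^{\theta}(\mathbf{E} e^{-\theta T})^{\lambda+\epsilon}\br)^t$. This base equals $1$ at $\theta = 0$, and the derivative of its logarithm at $\theta = 0$ equals $1 - (\lambda+\epsilon)\mathbf{E} T = -\epsilon/\lambda < 0$, so for some small $\theta_+ > 0$ the base is a constant $\sigma_+ \in (0,1)$. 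For the lower tail I would dualize symmetrically using $\{N_{nt}/n \le (\lambda-\epsilon)t\} \subset \{S_m > nt\}$ with $m = \lfloor n(\lambda-\epsilon)t \rfloor + 1$, and the Chernoff bound $\mathbf{P}(S_m > nt) \le e^{-\theta nt}(\mathbf{E} e^{\theta T})^m$ for $\theta > 0$ inside the exponential moment region; the same analysis with base $e^{-\theta}(\mathbf{E} e^{\theta T})^{\lambda-\epsilon}$, whose logarithmic derivative at $\theta = 0$ is $-1 + (\lambda-\epsilon)/\lambda = -\epsilon/\lambda < 0$, produces some $\sigma_- \in (0,1)$.

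Setting $\sigma = \max(\sigma_+, \sigma_-)$ and combining the two tail bounds by a union bound yields the claim. The only delicate point is the integer rounding in $m$, which contributes only an $O(1)$ multiplicative factor that disappears on taking $n$-th roots; I do not foresee a genuine obstacle. The content of the argument is simply the observation that a deviation of $N_{nt}/n$ from $\lambda t$ at scale $\epsilon t$ corresponds to a Cram\'er deviation of the partial sums over order $nt$ summands, so the log-probability decays linearly in both $n$ and $t$, and the $n$-th root converts this into the asserted $\sigma^t$ decay, uniformly in $t \in \R_+$ since $\theta_\pm$, and hence $\sigma$, do not depend on $t$.
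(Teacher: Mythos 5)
Your proof is correct and is essentially the same as the paper's: both reduce the event to a pair of Cram\'er-type deviations of the partial sums of inter-renewal times, apply a Chernoff bound with a small parameter, and then observe that the centred inter-renewal time has mean zero so the logarithmic derivative of the exponent at the origin equals $-\epsilon/\lambda<0$, forcing the base below one for a sufficiently small parameter. The only cosmetic difference is that the paper centres the inter-renewal times before applying the exponential Markov inequality, while you carry the centring through the derivative computation; the content and the choice of uniform-in-$t$ parameter are identical.
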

\begin{proof}
  Let $\vartheta_1,\vartheta_2,\ldots$ denote the successive inter-renewal times.
For suitable $\alpha>0$\,,
\begin{multline*}
 \mathbf P(\abs{\frac{N_{nt}}{n}-\lambda t}>\epsilon t)\le
\mathbf P\bl(\sum_{i=1}^{\lfloor
  n(\lambda+\epsilon)t\rfloor}(\vartheta_i-\frac{1}{\lambda})
\le
nt-
\frac{\lfloor n(\lambda+\epsilon)t\rfloor}{\lambda}\br)\\+\mathbf P\bl(
\sum_{i=1}^{\lfloor
  n(\lambda-\epsilon)t\rfloor}(\vartheta_i-\frac{1}{\lambda})
\ge nt-\frac{\lfloor n(\lambda-\epsilon)t\rfloor}{\lambda}\br)
\le \exp\bl(\lfloor
  n(\lambda+\epsilon)t\rfloor\ln\mathbf E\exp(-\alpha
  (\vartheta_1-\frac{1}{\lambda}))\\
-\alpha(\frac{\lfloor n(\lambda+\epsilon)t\rfloor}{\lambda}-nt)\br)
+\exp\bl(\lfloor
  n(\lambda-\epsilon)t\rfloor\ln\mathbf E\exp(\alpha
  (\vartheta_1-\frac{1}{\lambda}))
-\alpha(nt-\frac{\lfloor n(\lambda-\epsilon)t\rfloor}{\lambda})\br)\,.
\end{multline*}
Hence,
\begin{multline*}
  \limsup_{n\to\infty}
 \mathbf P(\abs{\frac{N_{nt}}{n}-\lambda t}>\epsilon t)^{1/(nt)}\le
\exp \bl(
  (\lambda+\epsilon)\ln\mathbf E\exp(-\alpha
  (\vartheta_1-\frac{1}{\lambda}))
-\alpha\frac{\epsilon}{\lambda}\br)\\
\vee\exp \bl(
  (\lambda-\epsilon)\ln\mathbf E\exp(\alpha
  (\vartheta_1-\frac{1}{\lambda}))
-\alpha\frac{\epsilon}{\lambda}\br)\,.
\end{multline*}
Since $\mathbf E(\vartheta_1-1/\lambda)=0$\,, the latter righthand side is
less than unity for $\alpha$ small enough.
\end{proof}
\def\cprime{$'$} \def\cprime{$'$} \def\cprime{$'$} \def\cprime{$'$}
  \def\cprime{$'$} \def\polhk#1{\setbox0=\hbox{#1}{\ooalign{\hidewidth
  \lower1.5ex\hbox{`}\hidewidth\crcr\unhbox0}}} \def\cprime{$'$}
  \def\cprime{$'$} \def\cprime{$'$} \def\cprime{$'$} \def\cprime{$'$}
  \def\cprime{$'$}

\def\cprime{$'$} \def\cprime{$'$} \def\cprime{$'$} \def\cprime{$'$}
  \def\cprime{$'$} \def\polhk#1{\setbox0=\hbox{#1}{\ooalign{\hidewidth
  \lower1.5ex\hbox{`}\hidewidth\crcr\unhbox0}}} \def\cprime{$'$}
  \def\cprime{$'$} \def\cprime{$'$} \def\cprime{$'$} \def\cprime{$'$}
  \def\cprime{$'$}


\end{document}